\definecolor{labelkey}{gray}{.8}
\definecolor{refkey}{gray}{.8}
\definecolor{darkblue}{rgb}{0,0,0.7} 
\definecolor{darkgreen}{rgb}{0,0.5,0}
\newcommand{\ud}{\;\mathrm{d}} 
\newcommand{\uud}{\mathrm{d}}
\providecommand{\Fbeta}{F_{\beta,c_\beta}}
\newtheorem{theorem}{Theorem}[section]
\newtheorem{proposition}[theorem]{Proposition}
\newtheorem{lemma}[theorem]{Lemma}
\newtheorem{assumption}[theorem]{Assumption}
\numberwithin{equation}{section}
\numberwithin{theorem}{section}
\newcommand{\Reals}{{\mathbb R}}
\newcommand{\R}{{\mathbb R}}
\renewcommand{\varrho}{{\rho}}
\newcommand{\N}{\mathbb N}
\newcommand{\loc}{\mathrm{loc}}
\newcommand{\step}[1]{\noindent \textit{Step} #1.}
\title{Existence and non-uniqueness of stationary states for the Vlasov-Poisson equation on $\R^3$ subject to attractive background charges }{}
\author[1]{ Raphael Winter\thanks{raphael.elias.winter@univie.ac.at}}
\affil[1]{University of Vienna, Austria}
\begin{document}
	
	\maketitle

	\begin{abstract}
			We prove the existence of stationary solutions for the density of an infinitely extended plasma interacting with an arbitrary configuration of background charges. Furthermore, we show that the solution cannot be unique if the total charge of the background is attractive. In this case, infinitely many different stationary solutions exist. The non-uniqueness can be explained by the presence of trapped particles orbiting the attractive background charge.
	\end{abstract}

	\tableofcontents

\section{Introduction}

We consider the response of a spatially homogeneous plasma to a given background distribution of charges. This phenomenon can be modeled by the following nonlinear stationary Vlasov-Poisson equation for the plasma electron density $f(x,v)$, on the three dimensional phase space $\R^3\times \R^3$
\begin{align}
    v \cdot \nabla_x f - \nabla_x Q\cdot \nabla_v f &= 0, \label{eq:Vlasov1}\\
    -\Delta Q &= \rho[f]-1 + \mu, \label{eq:Vlasov2}  \\
    \lim_{|x|\rightarrow \infty} f(x,v) &= f_0(v). \label{eq:Vlasov3}
\end{align}
Here $\mu$ describes the distribution of background charges and $f_0$ the electron distribution far away from the perturbation.
As usual, $\rho[f]$ denotes the spatial density of electrons given by
\begin{align}\label{eq:rho} 
    \rho[f](x) = \int_{\Reals^3} f(x,v) \ud{v}.
\end{align}
The system~\eqref{eq:Vlasov1}-\eqref{eq:Vlasov3} is often considered in plasma physics, since the onset of Debye screening can quickly be derived for the linearized system. A detailed discussion can be found in the plasma physics textbooks~\cite{goldston_introduction_1995} and \cite{nicholson_introduction_1983}. The model~\eqref{eq:Vlasov1}-\eqref{eq:Vlasov3} is also used in other contexts in plasma physics, for instance for characterizing plasma waves (cf.~\cite{schamel_stationary_1971}). For a plasma interacting with a repulsive point charge, screening has been proved rigorously in~\cite{arroyo-rabasa_debye_2021}.

A number of results have been shown for the nonlinear Vlasov-Poisson equation in the case of finite mass and finite energy. In this case, the conserved quantities can be used to study existence and stability of stationary solutions (cf.~\cite{rein_non-linear_1994}). We also refer to recent results on the stability of a point charge interacting with a plasma of finite mass ~\cite{pausader_stability_2021,pausader_stability_2022}. 

A closely related problem is the existence and stability of stationary solutions to the Vlasov-Poisson Boltzmann equation. In the presence of collisions, the natural boundary condition $f_0$ are Maxwellian distributions.  We refer to~\cite{duan_optimal_2011,duan_stability_2010,duan_existence_2007} for details.

The key point of this paper is to show that stationary solutions to~ \eqref{eq:Vlasov1}-\eqref{eq:Vlasov3} exist for general background measures $\mu$, and infinitely many stationary states $f$ exist as soon as the total charge of the background measure is attractive. 

The main result is contained in the following theorem.

\begin{theorem} \label{thm:main}
    Let $f_0(v) = F_0(\tfrac12 |v|^2)$ for some function $F_0\in C^1(\R)$ satisfying Assumption~\ref{ass:F0} below, and $\mu\in \mathcal{M}(\R^3)$ be a measure with finite total variation, i.e.
    \begin{align}\label{eq:mu} 
        \int_{\R^3} |\mu|(\uud{x}) < \infty.
    \end{align}
    Then there exist a  solution $f\in W^{1,1}_{\loc}(\R^3\times \R^3)\cap L^1_{\loc}(\R^3;L^1(\R^3))$, $Q\in W^{1,1}(\R^3)\cap L^2(\R^3)$ to the stationary Vlasov-Poisson problem~\eqref{eq:Vlasov1}-\eqref{eq:Vlasov3}.  Here the equation for $Q$ is understood in the weak sense, and the boundary condition~\eqref{eq:Vlasov3} as 
    \begin{align}
        \| f(\cdot,v)-f_0(v)\|_{L^2(\R^3)}< \infty, \quad v\in \R^3 \, a.e.
    \end{align}
    If the total charge $\theta\in \Reals$ given by
    \begin{align} \label{eq:theta} 
        \theta= \int_{\R^3} \mu(\uud{x})
    \end{align}
    is negative, then there exist infinitely many different solutions to the stationary problem~\eqref{eq:Vlasov1}-\eqref{eq:Vlasov3}.
\end{theorem}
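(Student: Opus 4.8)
My plan is to build stationary solutions by the classical Ansatz of writing $f$ as a function of the particle energy $e(x,v) = \tfrac12|v|^2 + Q(x)$, which automatically solves the Vlasov equation \eqref{eq:Vlasov1}. Concretely, I would set $f(x,v) = F_0\bigl(\tfrac12|v|^2 + Q(x)\bigr)$ for a fixed profile (this is consistent with the boundary condition since $Q(x)\to 0$ as $|x|\to\infty$, which I would enforce), reducing the problem to the scalar semilinear Poisson equation
\begin{align}\label{eq:semilin}
    -\Delta Q = h(Q, x) := \rho\bigl[F_0(\tfrac12|\cdot|^2 + Q)\bigr] - 1 + \mu,
\end{align}
where $\rho[F_0(\tfrac12|v|^2 + Q)](x) = \int_{\R^3} F_0(\tfrac12|v|^2 + Q(x))\,dv$ depends on $x$ only through $Q(x)$, so $h = g(Q) + \mu - 1$ with $g(s) = \int_{\R^3} F_0(\tfrac12|v|^2 + s)\,dv$. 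I expect Assumption~\ref{ass:F0} to guarantee that $g$ is $C^1$, decreasing (more potential means fewer particles), with $g(0) = 1$, so that $g(s) - 1$ has the sign of $-s$; this monotonicity is the structural property that makes \eqref{eq:semilin} variational and well-posed.

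The existence half would then proceed by the direct method: I would write $Q$ as a perturbation and minimize the functional $\mathcal{J}(Q) = \tfrac12\int |\nabla Q|^2 - \int G(Q) + \langle \mu, Q\rangle$ over $\dot H^1(\R^3)$, where $G' = g - 1$; convexity of $-\int G$ (from monotonicity of $g-1$) plus the Dirichlet term gives a strictly convex, coercive functional, hence a unique minimizer, which is the weak solution. The regularity claims $Q \in W^{1,1}\cap L^2$ and $f \in W^{1,1}_{\loc}\cap L^1_{\loc}(L^1)$ and the quantitative bound $\|f(\cdot,v) - f_0(v)\|_{L^2} < \infty$ would follow from elliptic estimates for \eqref{eq:semilin}, using that $\mu$ has finite total variation to control the singular part of $Q$ near the support of $\mu$ (here $Q$ behaves locally like a superposition of Newtonian potentials, which is in $L^2_{\loc}$ but not $L^\infty$, explaining the weak function spaces). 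The main obstacle in this part is handling the low decay/integrability at spatial infinity — $Q$ is only in $\dot H^1$, not $L^1$ a priori — so I would need the screening mechanism (the linearization of $g-1$ at $0$ has a strictly negative slope, giving exponential Debye decay of $Q$) to upgrade integrability, which is exactly why the hypothesis $g'(0) < 0$ is essential.

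For the non-uniqueness when $\theta < 0$, the idea is that the convex variational solution above is only the solution among potentials decaying at infinity with no trapped particles; when the background is net attractive, one can add a population of particles on closed (bounded) orbits — i.e. particles with negative energy $e(x,v) < 0$ — without disturbing the behavior at infinity. Formally, I would modify the Ansatz to $f(x,v) = F_0(e(x,v)) + \lambda\, \chi(e(x,v))$ where $\chi$ is supported in $\{e < 0\}$ and $\lambda > 0$; since $\{e(x,v) < 0\} = \{\tfrac12|v|^2 < -Q(x)\}$ is nonempty precisely where $Q(x) < 0$, and a net-attractive background forces $Q$ to be negative on a set of positive measure (by the sign of $\theta$ and the maximum principle / far-field expansion $Q(x) \sim -\theta/(4\pi|x|)$, which is positive... careful: attractive background means $\mu$ net negative, $\theta<0$, so $Q \sim \theta/(4\pi|x|)<0$ far out, but near the background $Q$ is very negative), there is a genuine potential well trapping particles. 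Each choice of $\lambda$ (or of the bump profile $\chi$) yields a new coupled fixed-point problem $-\Delta Q = g(Q) + \lambda\,\tilde g(Q) - 1 + \mu$; solving it again by convex minimization for small $\lambda$ gives a one-parameter family $Q_\lambda$ with $Q_\lambda \not\equiv Q_{\lambda'}$ (distinguished, e.g., by $\int \rho[f]$ or by the value of the minimum of $Q$), hence infinitely many distinct stationary states. The hard part here is verifying that the trapped-particle bump does not destroy the solvability — one must check that adding $\lambda\tilde g(Q)$ preserves enough monotonicity/coercivity (it does for $\lambda$ small, or if $\tilde g$ is also monotone), and that the resulting solutions are genuinely pairwise distinct rather than collapsing onto the same $Q$; I would prove distinctness by showing $\lambda \mapsto Q_\lambda$ is strictly monotone in an appropriate pointwise sense on the well region, or that the total trapped charge $\int \lambda\chi(e)\,dx\,dv$ varies continuously and nontrivially with $\lambda$.
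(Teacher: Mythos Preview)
Your overall architecture matches the paper's --- the energy Ansatz $f=F(\tfrac12|v|^2+Q)$, reduction to the scalar equation $-\Delta Q=g(Q)-1+\mu$, and non-uniqueness via the freedom of $F$ on negative energies (trapped particles). The existence argument, however, takes a different route and has a genuine gap: in three dimensions the term $\langle\mu,Q\rangle$ is \emph{not} a continuous linear functional on $\dot H^1(\R^3)$ (or $H^1$), since a general finite measure --- a Dirac mass, say --- does not lie in $H^{-1}(\R^3)$. Your functional $\mathcal J$ is therefore not well defined on the space you minimize over, and the direct method cannot be launched as written. The paper handles exactly this obstruction by decomposing $Q=S+R$ with $S=\Phi_\sigma*\mu$ the Yukawa potential of the background ($\sigma=-g'(0)>0$), so that the singularity of $\mu$ sits entirely in $S\in L^p$, $p<3$, and the remainder solves $R=\Phi_\sigma*B[R+S]$ with $B[P]=g(P)-1+\sigma P\in[0,C(|P|^\alpha\wedge|P|^2)]$. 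A fixed point is then obtained by Schaefer's theorem: the map $R\mapsto\Phi_\sigma*(B[R+S]\wedge H)$ is continuous and compact in $L^{2\alpha}$ (Riesz criterion, using the pointwise upper bound for tightness and a $W^{1,q}$ bound for equicontinuity), and the truncation by $H$ is removed afterwards via a comparison argument exploiting $g'<0$. You could rescue the variational approach by first subtracting $S$ and minimizing in $R$, but then you are essentially reproducing the paper's decomposition. A related omission: since $Q$ can already be negative in the existence step, you must extend $F_0$ to $\R^-$ from the outset, not only for non-uniqueness; the paper does this explicitly (Lemma~\ref{lem:Fbeta}) and verifies the growth bounds on $g$ (conditions \eqref{cond:g0}--\eqref{cond:g3}) that drive the compactness.

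For non-uniqueness your idea coincides with the paper's in spirit but is implemented differently. Rather than perturbing by $\lambda\chi$, the paper uses a one-parameter family of extensions $F_{\beta,c_\beta}$ with distinct asymptotics as $r\to-\infty$. Distinctness of the resulting solutions is proved cleanly: first, $\theta<0$ forces $\{Q<0\}$ to have positive measure (otherwise $g(Q)-1\le 0$ and $0=\int(-\Delta Q)\le\int\mu=\theta<0$, a contradiction); second, since $g_{\beta_1}$ and $g_{\beta_2}$ disagree on $(-\infty,0)$, the two semilinear equations cannot share a solution $Q$, and a short argument shows the $f$'s differ as well. This avoids the smallness restriction and the monotonicity-in-$\lambda$ step that you leave open.
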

We split the proof of Theorem~\ref{thm:main} in parts. The existence of solutions is shown in Proposition~\ref{prop:existence}, using the extensions $F$ of $F_0$ constructed in Section~\ref{sec:extension}.  The non-uniqueness of solutions is given by Proposition~\ref{prop:uniqueness}.

In~\cite{arroyo-rabasa_debye_2021}, the existence of stationary solutions and their screening properties have been investigated for 
\begin{align}
    \mu(\uud{x}) = \theta \delta_0(\uud{x}), \quad \theta >0 . 
\end{align}
The existence proof in~\cite{arroyo-rabasa_debye_2021} relies on  radial symmetry of the constructed solution $f$ and the compact embedding $H^1_r(\Reals^3) \subset L^p_r(\R^3)$, $2<p<6$ under radial symmetry. Most importantly, the proof only applies to repulsive interaction, i.e. $\theta>0$.

The remainder of the paper is devoted to the proof of Theorem~\ref{thm:main}. Some parts of the proof follow similar to ~\cite{arroyo-rabasa_debye_2021}. New ideas are needed to deal with general, non-radial solutions, the attractive case $\theta<0$ and non-uniqueness of solutions.

As in~\cite{arroyo-rabasa_debye_2021}, we look for solutions $f$ of the form
\begin{align}\label{eq:Form} 
    f(x,v) = F\big(\tfrac12 |v|^2 + Q(x)\big).
\end{align}
Hence, if $Q(x)\rightarrow 0$ as $|x|\rightarrow \infty$, then the boundary condition in~\eqref{eq:Vlasov3} yields
\begin{align}
    f_0(v) = F(\tfrac12 |v|^2).
\end{align}
In the case of a repulsive point charge, we can show $Q\geq 0$. 
Therefore, the function $F$ in~\eqref{eq:Form} is completely determined by the boundary condition $f_0$ in~\eqref{eq:Vlasov3}. In the presence of an attractive test charge, $Q$ also attains negative values and the function $F$ is not uniquely determined by $f_0$. Physically, this can be explained by the presence of electrons trapped on orbits around the attractive background charge. This phonomenon is also discussed in~\cite{schamel_stationary_1971}.

In order to prove that infinitely many solutions exist for $\theta<0$ in~\eqref{eq:theta}, we construct a family of admissible extensions $F$ of $F_0$
\begin{align}\label{eq:F}
    F(r) = \begin{cases} F_0(r) \quad &\text{if }r\geq 0,\\
                        \tilde{F}(r) \quad &\text{if }r<0.
    \end{cases} 
\end{align}     
The set of admissible extensions $F$ is characterized by the function
\begin{align} \label{eq:g} 
    g(r) = 4 \pi \sqrt{2} \int_0^\infty \sqrt{s} F(r+s) \ud{s}, \quad r\in \Reals.
\end{align}

We make the following assumption on the distribution $f_0$ of the plasma at $|x|\rightarrow \infty$, which is also assumed in~\cite{arroyo-rabasa_debye_2021}.

\begin{assumption}[Velocity distribution at infinity] \label{ass:F0}
    The boundary condition $f_0$ can be represented as $f_0(v) = F_0\big(\frac12 |v|^2\big)$, where $F_0\in C^1(\Reals^+;\R^+)$ satisfies
    \begin{enumerate}[(i)]
        \item normalization
            \begin{align}\label{eq:F0normal}
                 4 \pi \sqrt{2} \int_0^\infty \sqrt{r} F_0(r) \ud{r} = 1.
            \end{align}
        \item decay condition
        \begin{align}\label{decay}
            |F_0(r)| + |F_0'(r)|\leq \frac{C}{1+r^3}. 
        \end{align}
        \item stability condition: $F_0$ satisfies: 
            \begin{align}\label{eq:monotone} 
                F_0'(r)<0,\quad r\geq 0. 
            \end{align}
    \end{enumerate}
\end{assumption}

For future reference, we define
\begin{align} \label{eq:sigma} 
    \sigma = -g'(0) >0. 
\end{align}

\section{Extension to the negative half-line}  \label{sec:extension}

We show the existence of solutions if the function $g$ defined in~\eqref{eq:g} satisfies the following four properties:
\begin{enumerate}[(i)]
    \item normalization:
        \begin{align}\label{cond:g0} 
            g(0)=1. 
        \end{align}
    \item differentiability and monotonicity: $g\in C^2(\R)$ and  
        \begin{align} \label{cond:g1}
            g'(r)< 0,\quad r\in \Reals. 
        \end{align}
    \item sub-differential at zero: 
        \begin{align} \label{cond:g2}
             g(r) \geq ( g(0) + g'(0) r).
        \end{align}
    \item growth condition: for some $1<\alpha<\frac32$ we have
    \begin{equation} \label{cond:g3}
    \begin{aligned}
        g(r) -( g(0) + g'(0) r)&\leq C_1  |r|^\alpha, \\
         |g'(r)-g'(0)| &\leq C_2 |r|^{\alpha-1}.
    \end{aligned}
    \end{equation}
\end{enumerate}

Before we establish the existence of solutions under the conditions ~\eqref{cond:g0}-\eqref{cond:g3}, we demonstrate that it is possible to extend the function $F_0$ to the negative half-line such that these conditions are met. This is the content of the following lemma. 

\begin{lemma} \label{lem:Fbeta} 
Let $F_0$ satisfy Assumption~\ref{ass:F0}.
    For $\beta\in (0,\frac12)$ and $c_\beta>0$ consider the function
\begin{align} \label{eq:Fbeta} 
        \tilde F_{\beta,c_\beta}(r) =   \frac{c_\beta r^2}{ \langle r\rangle^{\beta+2}} + e^{-r^2} (F_0(0) +F_0'(0) r)  \quad r\leq 0,
\end{align} 
 and let $F_{\beta,c_\beta}$ be the extension of $F_0$ by $\tilde{F}_{\beta,c_\beta}$. Here $\langle r\rangle = \sqrt{1+|r|^2}$ is the japanese bracket.
 
 Then $F_{\beta,c_\beta}\in C^1_b(\R;\R^+)$ and for $c_\beta>0$ large enough, the function $g_{\beta,c_\beta}$ defined by~\eqref{eq:g} satisfies the conditions~\eqref{cond:g0}, \eqref{cond:g1}, \eqref{cond:g2} and \eqref{cond:g3} with $\alpha=\frac32-\beta $. 
\end{lemma}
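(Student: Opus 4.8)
The plan is to verify the four conditions \eqref{cond:g0}--\eqref{cond:g3} for $g=g_{\beta,c_\beta}$ directly from the defining integral \eqref{eq:g}, splitting $g$ into the contribution of $F_0$ on $[\max(0,-r),\infty)$ and the contribution of $\tilde F_{\beta,c_\beta}$ on the interval where $r+s<0$. First I would record that for $r\ge 0$ one simply has $g(r)=4\pi\sqrt2\int_0^\infty\sqrt s\,F_0(r+s)\ud s$, so \eqref{eq:F0normal} gives $g(0)=1$, which is \eqref{cond:g0}. Differentiating under the integral (justified by the decay \eqref{decay}) and integrating by parts, I would derive a workable formula for $g'$ and $g''$; the key identity is $g'(r)=4\pi\sqrt2\int_0^\infty\sqrt s\,F'(r+s)\ud s$, valid because the boundary term at $s=0$ involving $\sqrt s$ vanishes. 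Monotonicity \eqref{cond:g1} for $r\ge 0$ is then immediate from \eqref{eq:monotone}; for $r<0$ one has to check $F_{\beta,c_\beta}'<0$ fails in general, so instead I would show that the \emph{weighted integral} $\int_0^\infty\sqrt s\,F'(r+s)\ud s$ stays negative. Since $F'$ equals $F_0'<0$ on the positive part and $\tilde F_{\beta,c_\beta}'$ on a bounded part near $0$, and since $\tilde F_{\beta,c_\beta}(r)=\frac{c_\beta r^2}{\langle r\rangle^{\beta+2}}+e^{-r^2}(F_0(0)+F_0'(0)r)$ has derivative of size $O(c_\beta)$ on a fixed compact set while $F_0'$ contributes a fixed strictly negative amount over an infinite tail — wait, that goes the wrong way. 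The correct mechanism is the opposite: the dangerous terms are controlled because $g(0)$, $g'(0)$ are \emph{fixed} numbers determined by $F_0$ alone (the $\tilde F$ part integrates against $\sqrt s$ only over $s\in[0,-r]$, an interval shrinking as $r\uparrow 0$), so $g'(0)=-\sigma<0$ is insensitive to $c_\beta$, and by continuity $g'<0$ on a neighborhood of $0$; away from $0$, for $r$ very negative the $c_\beta r^2/\langle r\rangle^{\beta+2}\sim c_\beta|r|^{-\beta}$ term dominates and I must check its contribution to $g'$ is negative, which follows since $\frac{d}{dr}|r|^{-\beta}>0$ for $r<0$ — so again this points the wrong way and the honest statement is that one needs $c_\beta$ \emph{not too large on the tail}, balanced against being large enough for \eqref{cond:g3}. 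I would resolve this by computing $g_{\beta,c_\beta}$ explicitly enough to see that the $r^2$ numerator makes the troublesome term $O(c_\beta|r|^{1-\beta})$ near zero, hence subleading.

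Next, for \eqref{cond:g2} and \eqref{cond:g3} I would introduce $h(r):=g(r)-(g(0)+g'(0)r)=g(r)-1+\sigma r$ and show $h\ge 0$ with $h(r)\le C_1|r|^\alpha$, $|h'(r)|\le C_2|r|^{\alpha-1}$ for $\alpha=\tfrac32-\beta$. For $r\ge 0$ this is a convexity statement about the $F_0$-integral: I would show $g$ is convex on $[0,\infty)$ using $g''(r)=4\pi\sqrt2\int_0^\infty\sqrt s\,F_0''(r+s)\ud s$ — but $F_0''$ has no sign, so instead I would integrate by parts once more to write $g''(r)=c\int_0^\infty s^{-1/2}(-F_0'(r+s))\ud s>0$, giving convexity and hence both \eqref{cond:g2} and a quadratic (better than $|r|^\alpha$) bound for $r\ge 0$. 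The real work is $r<0$. There, $h(r)=4\pi\sqrt2\int_0^{-r}\sqrt s\,\big(\tilde F_{\beta,c_\beta}(r+s)-F_0(0)-F_0'(0)(r+s)\big)\ud s$ plus a remainder from replacing $F_0$ by its linearization on $[0,\infty)$; the Gaussian factor $e^{-(r+s)^2}$ kills the linear part up to the same order. The dominant piece is $4\pi\sqrt2\,c_\beta\int_0^{-r}\sqrt s\,\frac{(r+s)^2}{\langle r+s\rangle^{\beta+2}}\ud s$, which for $r$ near $0$ behaves like $c_\beta\int_0^{-r}\sqrt s\,(r+s)^2\ud s\sim c_\beta|r|^{7/2}$ (too small) and for $|r|$ large behaves like $c_\beta\int_{\text{bulk}}\sqrt s\,|r+s|^{-\beta}\ud s\sim c_\beta|r|^{3/2-\beta}=c_\beta|r|^\alpha$. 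This is exactly where $\alpha=\tfrac32-\beta$ comes from, and where "$c_\beta$ large enough" enters: $h\ge 0$ needs the positive $c_\beta$-term to beat the (sign-indefinite, but $O(|r|^2)$ near zero and $O(|r|)$-ish controlled) contributions of the Gaussian/linear terms and the $F_0$-remainder.

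The derivative bound $|h'(r)|\le C_2|r|^{\alpha-1}$ I would get by differentiating the same integral representation: $h'(r)=g'(r)+\sigma$, and $g'(r)+\sigma=4\pi\sqrt2\int_0^\infty\sqrt s\,(F'(r+s)-F'(0^+))\ud s$-type expression (care with the jump of $F'$ at $0$ is unnecessary since $F_{\beta,c_\beta}\in C^1$ by the matching $\tilde F_{\beta,c_\beta}(0)=F_0(0)$, $\tilde F_{\beta,c_\beta}'(0)=F_0'(0)$, which I would check first — indeed at $r=0$ the $c_\beta$-term and its derivative both vanish thanks to the $r^2$ numerator, and $e^{-r^2}(F_0(0)+F_0'(0)r)$ matches value and slope). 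The same scaling analysis as for $h$ then yields the $|r|^{\alpha-1}$ bound. I expect the main obstacle to be the bookkeeping in the $r<0$ regime: simultaneously (a) keeping the Gaussian/linear remainder and the $F_0$-Taylor remainder small enough that the explicit $c_\beta r^2\langle r\rangle^{-\beta-2}$ term controls the sign for the sub-differential inequality, (b) extracting exactly the exponent $\alpha=\tfrac32-\beta$ uniformly for small and large $|r|$, and (c) making sure the value of $g'(0)$ is genuinely $-\sigma$ (unchanged by the extension) so that the linearization being subtracted in \eqref{cond:g2}--\eqref{cond:g3} is the right one; the $C^1_b$ and positivity claims are routine once the matching at $0$ and the decay \eqref{decay} are in hand.
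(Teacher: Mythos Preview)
Your overall strategy matches the paper's, and you correctly handle \eqref{cond:g0}, the convexity argument for \eqref{cond:g2} on $r\ge 0$, and the scaling $|r|^{3/2-\beta}$ for large negative $r$. But there is a genuine gap in your treatment of \eqref{cond:g1}, and a related oversight for \eqref{cond:g2} near zero on the negative side.

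For the monotonicity \eqref{cond:g1} you work with $g'(r)=4\pi\sqrt2\int_0^\infty\sqrt s\,F'(r+s)\ud s$ and try to control the sign of $F'$. As you yourself flag (``wait, that goes the wrong way'', ``so again this points the wrong way''), this cannot succeed: $\tilde F_{\beta,c_\beta}'$ is not single-signed, and for very negative $r$ the derivative of $c_\beta r^2\langle r\rangle^{-\beta-2}\sim c_\beta|r|^{-\beta}$ is positive, so the $c_\beta$-contribution to $\int\sqrt s\,F'(r+s)\ud s$ has the wrong sign and grows with $c_\beta$. The paper bypasses this completely by integrating by parts once in $s$:
\[
g'(r)\;=\;-\,2\pi\sqrt2\int_0^\infty \frac{F_{\beta,c_\beta}(r+s)}{\sqrt s}\ud s.
\]
Since $F_{\beta,c_\beta}>0$ everywhere (for $r\le 0$ the term $e^{-r^2}(F_0(0)+F_0'(0)r)$ is positive because $F_0'(0)<0$, and the $c_\beta$-term is manifestly nonnegative), this is strictly negative for all $r\in\R$. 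No balancing of $c_\beta$ is needed for \eqref{cond:g1}.

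For \eqref{cond:g2} on $r<0$ you correctly note that the $c_\beta$-contribution is only $O(c_\beta|r|^{7/2})$ near the origin and hence cannot dominate the $O(|r|^2)$ remainders there; you then leave unresolved how $h\ge 0$ is secured on a neighborhood of $0$. The paper's device is to split $g_{\beta,c_\beta}=g_{\beta,0}+c_\beta\cdot(\text{nonnegative integral})$ and observe that the same integration by parts gives $g_{\beta,0}''(0)=-2\pi\sqrt2\int_0^\infty F_0'(s)\,s^{-1/2}\ud s>0$. By continuity $g_{\beta,0}$ is convex on some fixed interval $[-c',0]$, so \eqref{cond:g2} holds there independently of $c_\beta$, and the nonnegative $c_\beta$-term only helps. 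For $r\le -c'$ one has the lower bound $c(c')\,c_\beta|r|^{3/2-\beta}$ that you identified, and choosing $c_\beta$ large handles the outer region. Also, your displayed formula for $h(r)$ on $r<0$ has the wrong subtracted linearization: in $h(r)=g(r)-g(0)-g'(0)r$ the integrand is $F(r+s)-F_0(s)-rF_0'(s)$, not $\tilde F(r+s)-F_0(0)-F_0'(0)(r+s)$.
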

\begin{proof}
    \step1 By construction we have $F_{\beta,c_\beta} \in C^1_b(\R)$. Moreover, 
    $F_{\beta,c_\beta}>0$ is positive since $F_0'>0$ (cf.~\eqref{eq:monotone}).
    
    \step2 The normalization condition~\eqref{cond:g0} follows since $F$ is an extension of $F_0$ and $F_0$ satisfies~\eqref{eq:F0normal}. 
    Furthermore, the derivative of $g_{\beta,c_\beta}$ can be represented as
    \begin{align}
        g_{\beta,c_\beta}'(y) = -4\pi \sqrt{2} \int_0^\infty \frac{\Fbeta(r+y)}{\sqrt{r}} \ud{r}. 
    \end{align}
    Since $\Fbeta$ is positive,~\eqref{cond:g1} follows.  
    
    \step3 For the proof of~\eqref{cond:g2}, we first remark that the condition holds for $r\geq0$. As observed in~\cite{arroyo-rabasa_debye_2021} this follows since $F_0$ satisfies~\eqref{eq:monotone}, and therefore $g_{\beta,c_\beta}$ is 
    \begin{align}
        g_{\beta,c_\beta}''(y) = -4\pi \sqrt{2} \int_0^\infty \frac{\Fbeta'(r+y)}{\sqrt{r}} \ud{r}>0 , \quad y\geq 0,
    \end{align}
    convex on the positive half-line. 
    
    \step 4 We  prove that~\eqref{cond:g2} holds for $r<0$ if $c_\beta>0$ large enough. 
     To this end, we decompose $g_{\beta,c_\beta}$ for $r\leq 0$ into 
    \begin{align}\label{eq:gbeta}
        g_{\beta,c_\beta}(r)    &= g_{\beta,0}(r) + c_\beta  4 \pi \sqrt{2} \int_0^{|r|} \frac{\sqrt{s}|r+s|^2}{(1+|r+s|^2)^{\frac{\beta+2}2}} \ud{s}.
    \end{align}
    We now observe that there exists $c'>0$ small enough such that~\eqref{cond:g2} is satisfied for $r\in [-c',0]$, independent of $\beta$, $c_\beta$. This follows from  $g''_{\beta,0}(0)>0$, and the other contribution in~\eqref{eq:gbeta} being positive. On the other hand, for $r\leq -c'$, we can estimate the second term in~\eqref{eq:gbeta} below by
    \begin{align}
        4 \pi \sqrt{2} \int_0^{|r|} \frac{\sqrt{s}|r+s|^2}{(1+|r+s|^2)^{\frac{\beta+2}2}} \ud{s}\geq c(c') c_\beta |r|^{\frac32-\beta}. 
    \end{align} 
    Since $\beta\in (0,\frac12)$, we can choose $c_\beta>0$ large enough such that~\eqref{cond:g2} holds.
    
    \step4 Since $g_{\beta,c_\beta}\in C^2(\R)$, the condition~\eqref{cond:g3} holds locally. It remains to check the asymptotics for $r\rightarrow -\infty$. We again use~\eqref{eq:gbeta}. From the decay condition on $F_0$~\eqref{decay} we easily obtain
    \begin{align}
        |g_{\beta,0}(r)|&\leq C|r|^\alpha,\\
        |g'_{\beta,0}(r)|&\leq C|r|^{\alpha-1}.
    \end{align}
    Similarly, the estimate follows for the integral term in~\eqref{eq:gbeta} by a straightforward computation.
    
\end{proof}

\section{Existence of solutions} 

\begin{lemma}\label{lem:B}
    Let $F\in C^1_b(\R)$ be a non-negative function such that the function $g$ (cf.~\eqref{eq:g}) satisfies the properties~\eqref{cond:g0}-\eqref{cond:g3} for some $1 <\alpha<\frac32$. Recall $\sigma>0$ introduced in~\eqref{eq:sigma} and for $P\in L^p(\R^3)$ define 
    \begin{align}\label{eq:BP}
        B[P](x) = g(P(x)) -1 + \sigma P(x). 
    \end{align}
    Then  we have 
    \begin{align}\label{eq:B2alpha} 
       0\leq  B[P](x) \leq C_0 (|P(x)|^\alpha \wedge |P|^2). 
    \end{align}
    Moreover, $B$ is a continuous operator $B: L^{2\alpha}(\R^3)\rightarrow L^2(\R^3)$.
\end{lemma}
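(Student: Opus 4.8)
The plan is to establish the pointwise bound first and then deduce continuity of the operator from it together with an estimate on differences.

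\textbf{Pointwise bound.} By definition $B[P](x) = g(P(x)) - g(0) + g'(0)\cdot 0$ corrected: using $g(0)=1$ from~\eqref{cond:g0} and $\sigma = -g'(0)$ from~\eqref{eq:sigma}, we have $B[P](x) = g(P(x)) - (g(0) + g'(0) P(x))$. So $B[P]$ is exactly the "defect from the tangent line at $0$" evaluated at $P(x)$. The lower bound $B[P](x)\geq 0$ is then immediate from the subdifferential inequality~\eqref{cond:g2}. For the upper bound, the first factor $|P(x)|^\alpha$ comes directly from the growth condition~\eqref{cond:g3}. The second factor $|P(x)|^2$ requires a Taylor expansion: writing $h(r) = g(r) - (g(0)+g'(0)r)$, we have $h(0)=h'(0)=0$, and $h'(r) = g'(r)-g'(0)$ satisfies $|h'(r)|\leq C_2|r|^{\alpha-1}$ by the second line of~\eqref{cond:g3} — but that only gives $|h(r)|\leq C|r|^\alpha$ again near zero. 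To get the $|r|^2$ bound I would instead use $g\in C^2(\R)$: since $g''$ is continuous it is bounded on any compact neighbourhood of $0$, say $|g''|\leq M$ on $[-1,1]$, hence $|h(r)|\leq \tfrac{M}{2}r^2$ for $|r|\leq 1$; and for $|r|\geq 1$ one has $|r|^\alpha \leq |r|^2$ anyway (since $\alpha<2$), so the two regimes combine to give $B[P](x)\leq C_0(|P(x)|^\alpha \wedge |P(x)|^2)$ with $C_0$ depending on $C_1$, $C_2$, $M$, $\alpha$. One should double-check the claimed bound reads $|P(x)|^2$ pointwise and $|P|^2$ is a typo for $|P(x)|^2$; I will treat it as pointwise.

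\textbf{Continuity $L^{2\alpha}\to L^2$.} First, well-definedness: from~\eqref{eq:B2alpha}, $|B[P](x)|^2 \leq C_0^2 |P(x)|^{2\alpha}$, so $\|B[P]\|_{L^2}^2 \leq C_0^2\|P\|_{L^{2\alpha}}^{2\alpha}$, which is finite for $P\in L^{2\alpha}(\R^3)$. For continuity, take $P_n\to P$ in $L^{2\alpha}$. Pass to a subsequence converging a.e.\ and dominated in the sense that $|P_n|^{2\alpha}$ has an $L^1$-convergent majorant (this is the standard consequence of $L^{2\alpha}$ convergence — extract a further subsequence with $\sum\|P_{n_{k+1}}-P_{n_k}\|_{L^{2\alpha}}<\infty$ so that the partial sums are dominated by a fixed $L^{2\alpha}$ function). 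Since $g$ is continuous, $B[\cdot]$ acts continuously pointwise, so $B[P_{n_k}](x)\to B[P](x)$ a.e.; and $|B[P_{n_k}](x)|^2 \leq C_0^2 |P_{n_k}(x)|^{2\alpha}$ is dominated. By dominated convergence $B[P_{n_k}]\to B[P]$ in $L^2$. A standard subsequence argument (every subsequence has a further subsequence converging to the same limit) upgrades this to convergence of the full sequence, giving sequential continuity, hence continuity.

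\textbf{Main obstacle.} The only genuinely delicate point is extracting the $\wedge|P(x)|^2$ half of the pointwise bound: the growth hypotheses~\eqref{cond:g3} as literally stated only control $h$ and $h'$ by powers $|r|^\alpha$, $|r|^{\alpha-1}$ with $\alpha<3/2$, which is \emph{weaker} than quadratic near $0$, so one really must invoke $g\in C^2$ separately to get the quadratic behaviour at the origin — and then patch it with the $\alpha$-growth for large $|r|$ using $\alpha<2$. Everything else (the lower bound, the $L^p$ estimate, and the dominated-convergence continuity argument) is routine. An alternative to the subsequence trick for continuity would be to prove a quantitative Lipschitz-type estimate $\|B[P]-B[P']\|_{L^2}\lesssim (\|P\|_{L^{2\alpha}}^{\alpha-1}+\|P'\|_{L^{2\alpha}}^{\alpha-1})\|P-P'\|_{L^{2\alpha}}$ using $|g'(r)-g'(s)|\lesssim |r-s|^{\alpha-1}$ (Hölder continuity of $g'$, which follows from~\eqref{cond:g3}) together with Hölder's inequality; this is cleaner if one wants an explicit modulus of continuity, but the pure continuity statement in the lemma only needs the softer argument.
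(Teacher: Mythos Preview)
Your argument is correct. For the pointwise bound you follow exactly the paper's line: nonnegativity from~\eqref{cond:g2}, the $|r|^\alpha$ control from~\eqref{cond:g3}, and the quadratic behaviour near zero from $g\in C^2$; you are simply more explicit about the patching at $|r|=1$. For continuity, however, you take a different route: you use the soft argument via dominated convergence along an a.e.-convergent, dominated subsequence, whereas the paper proves the quantitative estimate
\[
\|B[P]-B[Q]\|_{L^2}\lesssim \bigl(\|P\|_{L^{2\alpha}}+\|Q\|_{L^{2\alpha}}\bigr)\|P-Q\|_{L^{2\alpha}}
\]
directly, via the mean value theorem together with $|g'(\xi)-g'(0)|\leq C_2|\xi|^{\alpha-1}$ and H\"older with exponents $\tfrac{\alpha}{\alpha-1},\alpha$. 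This is precisely the alternative you sketch at the end (note that one only needs~\eqref{cond:g3} at the base point $0$, not full H\"older continuity of $g'$). The paper's version gives a local Lipschitz modulus on bounded sets of $L^{2\alpha}$, which is slightly stronger than the bare continuity you establish; your argument, on the other hand, is more robust and would work under weaker hypotheses on $g'$.
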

\begin{proof}
    The non-negativity of $B$ follows from the subdifferential condition~\eqref{cond:g2} on $g$. For the upper bound, we first recall that
    $g\in C^2$, and $\sigma$ is defined by~\eqref{eq:sigma}. Since $\alpha<2$ this yields
    \begin{align}
        0\leq B[P] \leq C(|P|^\alpha \wedge |P|^2),\quad  |P|\leq 1.
    \end{align}
    For $|P|\rightarrow \infty$ the inequality follows from the growth condition~\eqref{cond:g3}.

    Continuity of the operator $B:L^{2\alpha}(\R^3) \rightarrow L^2(\R^3)$ follows from 
    \begin{align}
        \|B[P]-B[Q]\|_{L^2(\R^3)} &\leq \| (|P|+|Q|)^{\alpha-1} |P-Q|\|_{L^2(\R^3)} \\
        &\lesssim \left( \|P\|_{L^{2\alpha}(\R^3)}+\|Q\|_{L^{2\alpha}(\R^3)}\right) \|P-Q\|_{L^{2\alpha}(\R^3)},
    \end{align}
    and finishes the proof.
\end{proof}

\begin{proposition}[Existence of solutions]\label{prop:existence}
    Let $F\in C^1_b(\R)$ be an extension of $F_0$ such that the function $g$ (cf.~\eqref{eq:g}) satisfies the properties~\eqref{cond:g0}-\eqref{cond:g3} for some $1 < \alpha < \frac32  $.
    Then there exists a solution  $f\in W^{1,1}_{\loc}(\R^3\times \R^3)\cap L^1_{\loc}(\R^3;L^1(\R^3))$, $Q\in W^{1,1}(\R^3)\cap L^2(\R^3)$ to the stationary Vlasov-Poisson system~\eqref{eq:Vlasov1}-\eqref{eq:Vlasov3} in the sense of Theorem~\ref{thm:main}.
\end{proposition}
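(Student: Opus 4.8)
I would reduce the system to a scalar semilinear equation for $Q$, solve it by the direct method in $H^1(\R^3)$ after subtracting an explicit singular part, and then recover $f$ from the ansatz~\eqref{eq:Form}. Inserting $f(x,v)=F\big(\tfrac12|v|^2+Q(x)\big)$ into~\eqref{eq:Vlasov1} makes it an identity, since $\nabla_x f=F'\nabla_x Q$ and $\nabla_v f=F'v$ give $v\cdot\nabla_x f-\nabla_x Q\cdot\nabla_v f\equiv0$; and passing to polar coordinates in $v$ and substituting $s=\tfrac12|v|^2$ yields $\rho[f](x)=\int_{\R^3}F\big(\tfrac12|v|^2+Q(x)\big)\,\uud v=g(Q(x))$ with $g$ as in~\eqref{eq:g}. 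Hence~\eqref{eq:Vlasov1}--\eqref{eq:Vlasov2} reduce to
\[
 -\Delta Q+\sigma Q\;=\;B[Q]+\mu\qquad\bigl(\text{equivalently }-\Delta Q=g(Q)-1+\mu\bigr),
\]
with $B$, $\sigma>0$ as in~\eqref{eq:BP}, \eqref{eq:sigma}; I would solve the first form weakly. Since $s\mapsto 1-g(s)$ is increasing by~\eqref{cond:g1}, this is an equation of absorption type with a nonlinearity of subcritical growth ($|g(s)|\lesssim|s|^\alpha$, $\alpha<\tfrac32<3$) driven by a bounded-measure source.

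\textbf{Variational setup.} Let $G_\sigma$ be the fundamental solution of $-\Delta+\sigma$ on $\R^3$ (the Yukawa kernel); then $G_\sigma\in L^p(\R^3)$ for $1\le p<3$ and $\nabla G_\sigma\in L^p(\R^3)$ for $1\le p<\tfrac32$. Put $Q_0:=G_\sigma\ast\mu$, so $-\Delta Q_0+\sigma Q_0=\mu$ weakly and, by Young's inequality and~\eqref{eq:mu}, $Q_0\in L^p(\R^3)$ for all $1\le p<3$ and $\nabla Q_0\in L^p(\R^3)$ for all $1\le p<\tfrac32$; here the hypothesis $\alpha<\tfrac32$ is used, giving $Q_0\in L^2\cap L^{\alpha+1}\cap L^{2\alpha}$. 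I would then look for $Q=Q_0+w$ with $w\in H^1(\R^3)$ solving the equation above; with $\Theta(t):=t-\int_0^t g(s)\,\uud s$ this amounts to $-\Delta w+\Theta'(Q_0+w)=\sigma Q_0$, the Euler--Lagrange equation of
\[
 \mathcal J[w]\;:=\;\tfrac12\!\int_{\R^3}\!|\nabla w|^2\,\uud x\;+\;\int_{\R^3}\!\bigl(\Theta(Q_0+w)-\Theta(Q_0)\bigr)\,\uud x\;-\;\sigma\!\int_{\R^3}\!Q_0\,w\,\uud x .
\]
By~\eqref{cond:g0}--\eqref{cond:g3}, $\Theta\ge0$ is strictly convex ($\Theta''=-g'>0$), quadratic near $0$ (so $\Theta(t)\ge c\,t^2$ for $|t|\le1$ and $\Theta(t)\ge c>0$ for $|t|\ge1$), and satisfies $|\Theta(t)|\le C(|t|^2+|t|^{\alpha+1})$ and $|\Theta'(t)|\le C(|t|+|t|^\alpha)$; together with $Q_0\in L^2\cap L^{\alpha+1}$ this makes $\mathcal J$ finite, G\^ateaux-differentiable and norm-continuous on $H^1(\R^3)$, hence -- being convex -- weakly lower semicontinuous. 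Since $\R^3$ is unbounded and no radial symmetry is imposed, there is no compact Sobolev embedding at hand, and it is this convexity that plays the role of the compactness argument of~\cite{arroyo-rabasa_debye_2021}.

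\textbf{Coercivity and the minimizer.} The crux is coercivity of $\mathcal J$ on $H^1(\R^3)$, where no Poincar\'e inequality holds. Writing $G:=\|\nabla w\|_{L^2}^2$ and $P:=\int_{\R^3}\Theta(Q_0+w)$, I would split $\R^3$ into $\{|Q_0+w|\le1\}$ and $\{|Q_0+w|>1\}$: on the former $\Theta(Q_0+w)\gtrsim|Q_0+w|^2\gtrsim\tfrac12 w^2-Q_0^2$; on the latter $\bigl|\{|Q_0+w|>1\}\bigr|\lesssim P$ and, by H\"older and the Sobolev embedding $\dot H^1(\R^3)\hookrightarrow L^6(\R^3)$, $\int_{\{|Q_0+w|>1\}}w^2\le\|w\|_{L^6}^2\,\bigl|\{|Q_0+w|>1\}\bigr|^{2/3}\lesssim G\,P^{2/3}$. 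Using $Q_0\in L^2(\R^3)$ this gives $\|w\|_{L^2}^2\lesssim P+1+G\,P^{2/3}$; inserting it into $\mathcal J[w]\ge\tfrac12 G+P-\sigma\|Q_0\|_{L^2}\|w\|_{L^2}-C$ and absorbing the lower-order terms by Young's inequality yields $\mathcal J[w]\ge\tfrac14(G+P)-C$, so that $\mathcal J[w]\to+\infty$ as $\|w\|_{H^1}\to\infty$ (either $G\to\infty$ directly, or $G$ stays bounded and $\|w\|_{L^2}\to\infty$ forces $P\to\infty$). The direct method then yields a minimizer $w^\ast\in H^1(\R^3)$, which -- by G\^ateaux-differentiability and convexity of $\mathcal J$ -- is a critical point, so $Q:=Q_0+w^\ast$ is a weak solution of $-\Delta Q+\sigma Q=B[Q]+\mu$. (Strict convexity in fact makes $w^\ast$ unique for the given $F$, consistent with the non-uniqueness in Theorem~\ref{thm:main} stemming from the freedom in the extension.)

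\textbf{Regularity and return to $f$.} As $Q\in L^{2\alpha}(\R^3)$, Lemma~\ref{lem:B} gives $B[Q]\in L^2(\R^3)$; combining $0\le B[Q]\le C_0(|Q|^\alpha\wedge|Q|^2)$ with $Q\in L^2(\R^3)$ and $\bigl|\{|Q|>1\}\bigr|\lesssim\|Q\|_{L^2}^2<\infty$ also gives $B[Q]\in L^1(\R^3)$. Hence $Q=G_\sigma\ast(B[Q]+\mu)$ with $B[Q]+\mu$ a finite measure, so $Q\in L^p(\R^3)$ for $1\le p<3$ and $\nabla Q=\nabla G_\sigma\ast(B[Q]+\mu)\in L^p(\R^3)$ for $1\le p<\tfrac32$; in particular $Q\in W^{1,1}(\R^3)\cap L^2(\R^3)$. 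Setting $f(x,v):=F\big(\tfrac12|v|^2+Q(x)\big)$, the chain rule ($F\in C^1_b$, $Q\in W^{1,1}_{\loc}$) gives $f\in W^{1,1}_{\loc}(\R^3\times\R^3)$; $\|f(x,\cdot)\|_{L^1(\R^3)}=g(Q(x))\le C(1+|Q(x)|^\alpha)\in L^1_{\loc}(\R^3)$ gives $f\in L^1_{\loc}(\R^3;L^1(\R^3))$; \eqref{eq:Vlasov1} holds a.e.\ hence distributionally; \eqref{eq:Vlasov2} holds weakly since $\rho[f]=g(Q)$; and, using $f_0(v)=F\big(\tfrac12|v|^2\big)$ and that $F$ is Lipschitz, $\|f(\cdot,v)-f_0(v)\|_{L^2(\R^3)}\le\|F'\|_\infty\|Q\|_{L^2(\R^3)}<\infty$ for every $v$, which is~\eqref{eq:Vlasov3} in the sense of Theorem~\ref{thm:main}. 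I expect the coercivity estimate to be the main obstacle: the whole-space setting offers neither a Poincar\'e inequality nor a compact embedding, so the $L^2$-control of $w$ must be extracted from the absorption term $\int\Theta(Q_0+w)$ -- only quadratic near $0$, merely bounded below away from it -- by combining it with $Q_0\in L^2$, the homogeneous Sobolev inequality and interpolation; the convexity of $\Theta$, supplying weak lower semicontinuity without any compactness, is the second structural point.
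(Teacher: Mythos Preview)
Your argument is correct, but it follows a genuinely different route from the paper's. The paper also splits $Q=S+R$ with $S=\Phi_\sigma\ast\mu$ the Yukawa potential of $\mu$, but instead of a variational principle it runs a Schaefer fixed-point argument on the map $R\mapsto \Phi_\sigma\ast\bigl(B[R+S]\wedge H\bigr)$ in $L^{2\alpha}(\R^3)$: the artificial cutoff by a fixed envelope $H$ (built from $B[S]$ and its Newtonian potential) furnishes uniform $L^{2\alpha}$ tails and $W^{1,2\alpha}$ equicontinuity, hence compactness via the Fr\'echet--Kolmogorov/Riesz criterion; afterwards the monotonicity $g'<0$ is used in a comparison step to show $B[R+S]\le H$, so the cutoff was inactive and $R$ solves the untruncated equation. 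In your approach the same monotonicity enters earlier and more structurally, as strict convexity of $\Theta$, which replaces compactness altogether (weak lower semicontinuity on $H^1(\R^3)$) and, as you note, yields uniqueness of $Q$ for a fixed extension $F$ as a byproduct---something the paper's Schaefer argument does not give. Conversely, the paper's scheme is more flexible: the truncation-plus-comparison device would survive perturbations that destroy the variational structure. Both proofs need $\alpha<\tfrac32$ in the same place, namely to have $S=Q_0\in L^{2}\cap L^{2\alpha}(\R^3)$, and both finish the $W^{1,1}\cap L^2$ regularity of $Q$ and the recovery of $f$ via $f(x,v)=F(\tfrac12|v|^2+Q(x))$ in essentially the same way.
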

\begin{proof}
    \step1  Recall $\sigma>0$ defined in~\eqref{eq:sigma}, and let $\Phi_\sigma$ be the fundamental solution to $(\sigma-\Delta)^{-1}$, i.e.
    \begin{align}\label{eq:PhiSigma}
        \Phi_\sigma(x) = \frac{e^{-\sqrt{\sigma}|x|}}{4\pi |x|}. 
    \end{align}
    Define $S\in S'(\R^3)$ by
    \begin{align}\label{S:Lp}
        S = \Phi_\sigma * \mu .
    \end{align}
    Since $\mu\in \mathcal{M}(\R^3)$ has finite total variation~(cf.~\eqref{eq:mu}), we know
    \begin{align}
        \|S\|_{L^p(\R^3)} < \infty, \quad p\in[1,3),
    \end{align}
    and in light of~\eqref{eq:B2alpha}, $B[S]$ satisfies
    \begin{align}
        \|B[S]\|_{L^p(\R^3)} < \infty, \quad p\in [1,\frac3{\alpha}).
    \end{align}
    By the Green's function property of $\Phi_\sigma$, $S$ is a weak solution to
    \begin{align} \label{iden:S} 
        (\sigma-\Delta) S = \mu.
    \end{align}
    Let us further introduce the functions $H_1$, $H$ by
    \begin{align}
        H_1(x)  &= \phi * B[S] ,\\
        H(x)    &=  C_0 ((|S|+|H_1|)^\alpha \wedge (|S|+|H_1|)^2) ,
    \end{align}
    where $C_0>0$ is the constant appearing in~\eqref{eq:B2alpha} and 
    \begin{align} \label{eq:phiriesz}
    \phi(x) = \frac{1}{4 \pi |x|}.
    \end{align}
    Using that $\phi$ defined in~\eqref{eq:phiriesz} is a Riesz potential, we find that
    \begin{align}
        \|H_1\|_{L^p(\R^3)} &< \infty, \quad p\in (3,\infty),\\
        \|H\|_{L^2(\R^3)}    &< \infty. \label{est:H}
    \end{align}
    \step2 
    We pick the coefficient $q_0$ as
    \begin{align}\label{eq:q0} 
        q_0= 2 \alpha  , 
    \end{align}
    and define the operator
    \begin{align}
        K: L^{q_0}(\Reals^3)    &\rightarrow L^{q_0}(\Reals^3) \\
            R               &\mapsto \Phi_\sigma * \left(B(R+S  )\wedge H\right) .
    \end{align}
    We claim that $K$ is a continuous compact operator. Continuity follows from the continuity of $B$ shown in Lemma~\ref{lem:B}. It remains to prove that the image of $K$ is precompact. To this end, consider a sequence $R_k\in L^{q_0}(\R^3)$, and $G_k := K[R_k]$. Then for some $M>0$ we have 
    \begin{align}
        0 \leq G_k &\leq \mathcal{H}:= \Phi_\sigma * H ,\label{eq:upper} \\
        \|G_k\|_{W^{1,q_0}(\R^3)} &< M, \label{eq:W1}
    \end{align}
    with $\Phi_\sigma$ as introduced in~\eqref{eq:PhiSigma}. We observe that due to~\eqref{est:H} the upper bound $\mathcal{H}$ satisfies
    \begin{align} \label{eq:cH} 
        \|\mathcal{H}\|_{L^p(\R^3)} = \|\Phi_\sigma * H\|_{L^p(\R^3)}  < \infty , \quad 2\leq p\leq \infty .
    \end{align}
    Compactness now follows from the Riesz criterion for precompactness in $L^p(\R^3)$ since
    \begin{align}
        \sup_{k\in \N} \|G_k\|_{L^{q_0}(\R^3)} &<\infty, \\
        \sup_{k\in \N} \|G_k\|_{L^{q_0}(\R^3\setminus B_R)} &\rightarrow 0 \quad \text{as } R\rightarrow \infty,\\
        \sup_{k\in \N} \|G_k(\cdot-h)-G_k(\cdot)\|_{L^{q_0}(\R^3)} &\rightarrow 0 \quad \text{as } |h|\rightarrow 0.  
    \end{align}
    For the first and second condition, we use~\eqref{eq:upper} and $\mathcal{H}\in L^{q_0}(\R^3)$, the last line follows from~\eqref{eq:W1}.

    \step3 We infer from Step~2 the existence of  a non-negative solution $R\in W^{1,q_0}(\R^3)\cap L^2(\R^3)$ to the equation
    \begin{align}\label{fixed:K}
        R = K(R)  . 
    \end{align}
    This follows  from Schaefer's fixed point theorem, since $K$ maps into a bounded set in $L^{q_0}(\R^3)$, so in particular there exists $M>0$ such that for $P\in L^{q_0}(\Reals^3)$ and $\lambda\in [0,1]$ with $P = \lambda K(P) $ we have
    \begin{align}
        \|P\|_{L^q(\Reals^3)} < M.
    \end{align}
    Together with Step~1 and Schaefer's fixed point theorem, this allows us to conclude the existence of a fixed point to the mapping $K$, i.e. $R$ satisfying~\eqref{fixed:K}. Non-negativity of $R$ follows by construction of $K$.
    
    \step4 There exists $R\in L^{q_0}(\R^3)$ such that
    \begin{align}\label{fixed:B}
        R= \Phi_\sigma * B(R+S) .
    \end{align}
    Notice that~\eqref{fixed:B} follows from~\eqref{fixed:K} if we can show $B[R+S]\leq H$. To this end, let $R\in L^{q_0}(\R^3)$ be any solution to \eqref{fixed:K}. Then $R$ is a weak solution to 
    \begin{align}
        (\sigma -\Delta)R = B[R+S] \wedge H. 
    \end{align}
    Unpacking the definition of $B$ (cf.~\eqref{eq:BP}) yields
    \begin{align}
        (\sigma -\Delta)R = \left( g(R+S)-1 +\sigma(R+S) \right) \wedge H,
    \end{align}
    so in particular 
    \begin{align}
        -\Delta R = \left( g(R+S)-1 +\sigma S \right) \wedge (H-\sigma R).
    \end{align}
    Since $R\geq 0 $ and $g$ is monotone decreasing due to~\eqref{cond:g1}, we infer
    \begin{align}
        -\Delta R \leq g(S) -1 + \sigma S = B[S],
    \end{align}
    and therefore $R\leq \phi * B[S]=H_1$. Courtesy of~\eqref{eq:B2alpha} we conclude $B[R+S] \leq H$.
    From $R$ satisfying~\eqref{fixed:B} we also infer $R\in W^{1,1}(\R^3)$.
    
    \step5 We define $Q$ by
    \begin{align}
        Q= R+ S.
    \end{align}
    Then $Q\in W^{1,1}(\R^3) \cap L^2(\R^3)$ is a weak solution to
    \begin{align}\label{iden:Q}
        -\Delta Q = g(Q) - 1 + \mu .
    \end{align}
    Here we have used the equation~\eqref{iden:S} and~\eqref{fixed:B}. 
    We now define the non-negative function $f$ by
    \begin{align}
        f(x,v) = F(\tfrac12 |v|^2 + Q(x)) .
    \end{align}
    Since $F\in C^1_b(\R)$, we obtain $f\in W^{1,1}_\loc(\R^3)$ and $f$ satisfies~\eqref{eq:Vlasov1}. The spatial density of $f$ simplifies to
    \begin{align}\label{eq:rhorep}
        \rho[f](x) = \int_{\R^3} f(x,v) \ud{v} = \int_{\R^3} F(\tfrac12 |v|^2+ Q(x)) \ud{v} = g(Q(x)). 
    \end{align}
    Hence $f\in L^1_\loc(\R^3;L^1(\R^3))$ and combining~\eqref{iden:Q} and~\eqref{eq:rhorep} we conclude~\eqref{eq:Vlasov2}. 
    For the boundary condition~\eqref{eq:Vlasov3} we use Lipschitz continuity of $F\in C^1_b(\R)$ to estimate
    \begin{align}
        \|f(\cdot,v)-f_0(v)\|_{L^2(\R^3)} &=\|F(\tfrac12 |v|^2+Q(\cdot))-F(\tfrac12 |v|^2)\|_{L^2(\R^3)} \\
                &\lesssim \|Q\|_{L^2(\R^3)} < \infty,
    \end{align}
    where we have used~\eqref{eq:upper},~\eqref{eq:cH} and~\eqref{S:Lp}.
\end{proof}

\section{Non-uniqueness}

In the preceeding sections we have constructed solutions of the Vlasov-Poisson equation of the form
\begin{align}
    f(x,v) = F(\tfrac12 |v|^2 + Q(x)),
\end{align}
for an infinite class of extensions $F_\alpha$ of the function $F_0$ to the negative half-line. 
The following Lemma proves that the associated solutions $f_\alpha$ do not coincide .

\begin{proposition}[Non-Uniqueness] \label{prop:uniqueness}
    Let $\beta_1<\beta_2\in (0,\frac12)$ and $F_1=F_{\beta_1,c_\beta}$, $F_2=F_{\beta_2,c_\beta}$, with $c_\beta>0$ large enough, the functions constructed in Lemma~\ref{lem:Fbeta}. 
    
    Let further $f_1,Q_1$ and $f_2,Q_2$ be associated solutions to the system~\eqref{eq:Vlasov1}-\eqref{eq:Vlasov3} provided by Proposition~\ref{prop:existence}. Then we have
    \begin{align}\label{eq:nonunique} 
        f_1\neq f_2 \in W^{1,1}_\loc(\R^3),\quad Q_1\neq Q_2 \in W^{1,1}(\R^3).
    \end{align}
\end{proposition}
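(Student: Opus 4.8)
The plan is to argue by contradiction: assume $f_1 = f_2$ on $\R^3 \times \R^3$ (or merely $Q_1 = Q_2$, which we will see is essentially equivalent here) and derive a contradiction from the different decay exponents $\beta_1 \neq \beta_2$ built into the extensions. The key observation is that by~\eqref{eq:rhorep}, the spatial density of a solution of the form $f_i = F_i(\tfrac12|v|^2 + Q_i)$ equals $g_i(Q_i)$, where $g_i = g_{\beta_i, c_\beta}$ is the function from~\eqref{eq:g} associated to $F_i$. Since both $F_1, F_2$ restrict to $F_0$ on $\R^+$, the functions $g_1$ and $g_2$ agree with $g_{\beta,0}$-type behaviour only through the common piece; crucially they differ on the negative half-line, and from the decomposition~\eqref{eq:gbeta} together with the lower bound derived in Step~4 of Lemma~\ref{lem:Fbeta}, one has, for $r \to -\infty$, an asymptotic of the form $g_i(r) - (1 + g_i'(0) r) \asymp |r|^{3/2 - \beta_i}$ (up to the contribution of $g_{\beta_i,0}$, which is $O(|r|^{3/2-\beta_i})$ as well by the decay condition~\eqref{decay}). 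Hence $g_1 \not\equiv g_2$ near $-\infty$, and in fact $g_1(r) - g_2(r)$ does not vanish identically on any neighbourhood of $-\infty$.

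First I would establish that $Q_1 = Q_2$ forces $f_1 = f_2$ and conversely, so it suffices to rule out $Q_1 = Q_2 =: Q$. If $Q_1 = Q_2 = Q$, then subtracting the two copies of~\eqref{iden:Q} gives $0 = -\Delta Q - (-\Delta Q) = (g_1(Q) - 1 + \mu) - (g_2(Q) - 1 + \mu) = g_1(Q) - g_2(Q)$ as distributions, i.e. $g_1(Q(x)) = g_2(Q(x))$ for a.e.\ $x \in \R^3$. Thus the contradiction will follow once I show that $Q$ takes values in a region where $g_1 \neq g_2$ on a set of positive measure. The natural place to look is near the attractive background: the next step is to show that the constructed $Q$ is genuinely negative somewhere, in fact that $Q$ attains a range of negative values on a set of positive Lebesgue measure. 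Since $\theta = \int \mu < 0$ (the attractive case), the far-field behaviour of $S = \Phi_\sigma * \mu$ is governed by $S(x) \sim \theta\, \Phi_\sigma(x) < 0$, and $S \to 0^-$ at infinity along a set of positive measure, while $R = Q - S \geq 0$. A quantitative lower bound $R \leq H_1 = \phi * B[S]$ with $H_1 \to 0$ at infinity (Riesz potential decay) shows $Q = R + S$ is negative on an unbounded region where $|S|$ dominates $H_1$; by continuity of $S$ away from $\supp\mu$ and the fact that $S$ ranges continuously down from $0$, the image $Q(\R^3)$ contains an interval $(-\delta, 0)$ up to a null set, hence a set of negative values of positive measure.

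Finally I combine the two facts. On the one hand $g_1 = g_2$ on the range of $Q$ (up to null sets); on the other hand the range of $Q$ contains negative values accumulating at $0^-$ — but that alone is not enough, since $g_1$ and $g_2$ both agree to second order at $0$ (both have $g_i(0)=1$, and one can check $g_i'(0) = -\sigma$ is the same, as $\sigma$ is defined via $F_0$ only, cf.~\eqref{eq:sigma} and the common quadratic-vanishing term $e^{-r^2}(F_0(0)+F_0'(0)r)$ in~\eqref{eq:Fbeta}). So the refined point is the strict inequality $g_1(r) \neq g_2(r)$ for $r$ in some fixed subinterval $[-b,-a] \subset (-\infty,0)$ with $a > 0$: this follows because $c_\beta(g_1 - g_2)$-type difference of the integral terms in~\eqref{eq:gbeta} is, for fixed $c_\beta$, a nonzero real-analytic-in-$r$ (indeed smooth, by differentiating under the integral) function whose leading behaviour at $-\infty$ is $\asymp |r|^{3/2-\beta_1} - |r|^{3/2-\beta_2}$, nonzero since $\beta_1 \neq \beta_2$. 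Therefore it suffices to show $Q$ attains values in such an interval $[-b,-a]$ on a positive-measure set, which I get from the continuity and sign analysis of $Q$ near a point of $\supp \mu$ where $\mu$ has negative mass: there $S$ blows down to $-\infty$ (if $\mu$ has an atom) or at least takes arbitrarily negative values, and by the intermediate value theorem $Q$ sweeps through $[-b,-a]$ on a set of positive measure. I expect the main obstacle to be precisely this last point — pinning down a fixed negative interval on which (a) $g_1$ and $g_2$ provably differ and (b) $Q$ provably takes values on a positive-measure set — since a general background measure $\mu$ with $\theta < 0$ need not have an atom, so one must argue more carefully that the total negative charge propagates into the sub-level sets of $Q$; concentrating $\mu$ or using that $\theta<0$ implies $S$, and hence $Q$, is bounded away from $0$ on a set of positive measure near the "center of charge" should suffice.
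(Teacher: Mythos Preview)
Your overall strategy matches the paper's: assume $Q_1 = Q_2 =: Q$, subtract the two copies of~\eqref{iden:Q} to get $g_1(Q) = g_2(Q)$ a.e., and contradict this by showing that $Q$ takes negative values on a set of positive measure while $g_1 \neq g_2$ there. However, you overcomplicate both halves of this last step, and the ``main obstacle'' you identify is a phantom.

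First, for the negativity of $Q$: the paper's argument is a one-line integration. If $Q \geq 0$ a.e.\ then by monotonicity~\eqref{cond:g1} one has $g(Q) - 1 \leq 0$, so integrating~\eqref{iden:Q} over $\R^3$ (using $Q \in W^{1,1}$ to kill the Laplacian) gives $0 \leq \theta < 0$, a contradiction. No far-field analysis of $S$ and $R$, and no continuity of $Q$, is needed.

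Second, and more importantly, you miss that $g_1(r) > g_2(r)$ for \emph{every} $r < 0$, not just asymptotically or on some subinterval. This is immediate from the decomposition~\eqref{eq:gbeta}: the term $g_{\beta,0}$ does not depend on $\beta$ (it comes from the $\beta$-independent part $e^{-r^2}(F_0(0)+F_0'(0)r)$ of~\eqref{eq:Fbeta}), and the integrand in the second term is strictly larger for smaller $\beta$ (since $(1+|r+s|^2)^{-(\beta+2)/2}$ is decreasing in $\beta$). Hence $\beta_1 < \beta_2$ gives $g_1(r) > g_2(r)$ for all $r < 0$. There is therefore no need to isolate an interval $[-b,-a]$ on which $g_1 \neq g_2$, and no need to argue via the intermediate value theorem that $Q$ hits such an interval on a positive-measure set. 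The mere positivity of $|\{Q<0\}|$ already forces $g_1(Q) \neq g_2(Q)$ on that set.

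Two minor remarks. Your claim that ``$Q_1 = Q_2$ forces $f_1 = f_2$'' is false, precisely because $F_1 \neq F_2$ on the negative half-line; only the converse implication is true, and it is all you need. Also, the paper treats $f_1 \neq f_2$ by a separate short argument (taking an infimum over the unknown value of $Q_2$), though your route via $f_1 = f_2 \Rightarrow \rho[f_1]=\rho[f_2] \Rightarrow \Delta(Q_1-Q_2)=0$ together with Liouville for $L^2$ harmonic functions would also work.
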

\begin{proof}
    \step1 We first prove  $Q_1(x)<0$ on a set of positive measure. Recall the equation for $Q$
    \begin{align}
        -\Delta Q = g(Q) - 1 + \mu \in \mathcal{M}(\R^3).
    \end{align}
    Assume $Q \geq 0$ a.e.. Then $g(Q)-1\leq 0$, and therefore
    \begin{align}
        0 = \int_{\R^3} -\Delta Q(x) \ud{x} \leq \int_{\R^3} \mu(\uud{x}) = \theta < 0, 
    \end{align}
    which yields a contradiction.
    
    \step2 The functions $Q_1$ and $Q_2$ do not coincide, i.e. $|Q_1-Q_2|\neq 0 \in L^1_{\loc}(\R^3)$. Again we argue by contradiction. If $Q_1=Q_2$ a.e. then
    \begin{align}
        g_1(Q_1(x)) = g_2(Q_1(x))\quad  x\in \R^3 a.e..
    \end{align}
    However, we have $g_1(r)>g_2(r)$ on the negative half-axis $r<0$ since $\beta_1<\beta_2$. Since $Q_1$ takes negative values on a set of positive measure we reach a contradiction. 
    
    \step3 Finally $|f_1-f_2|\neq 0\in L^1_\loc(\R^3)$. Assume the contrary, and let $A\subset \R^3$ the set on which $Q_1<0$. By Step~1, $A$ has positive measure. Then we estimate 
    \begin{align}
        0   &= \int_{\R^3\times \R^3} |f_1(x,v)-f_2(x,v)| \ud{x} \ud{v}\\
            &\geq  \int_{A} \int_{\R^3} |F_1(\tfrac12 |v|^2 + Q_1(x))  - F_2(\tfrac12 |v|^2 + Q_2(x))|  \ud{v} \ud{x}\\
            &\geq  \int_{A} \inf_{r\in \R}\int_{\R^3} |F_1(\tfrac12 |v|^2 + Q_1(x))  - F_2(\tfrac12 |v|^2 + r)|  \ud{v} \ud{x}.
    \end{align}
    On the other hand, for any $y<0$ the integrand 
    \begin{align}
        \inf_{r\in \R} \int_{\R^3} |F_1(\tfrac12 |v|^2 + y)  - F_2(\tfrac12 |v|^2 + r)|\ud{v} >0 
    \end{align}
    is positive. This leads to a contradiction to the assumption. 
\end{proof}

	\section*{Acknowledgements}
	
	R.W. acknowledges support of SFB 65 "Taming Complexity in Partial Differential Systems" at the University of Vienna. Furthermore, R.W. would like to thank
	the Isaac Newton Institute for Mathematical Sciences for support and hospitality during the programme
	"Frontiers in kinetic theory: connecting microscopic to macroscopic scales - KineCon 2022" when work
	on this paper was undertaken. This work was supported by EPSRC Grant Number EP/R014604/1.
	
	\section*{Data Availability Statement}
Data sharing is not applicable to this article as no new data were created or analysed in this study.
\section*{Author Declarations}
The author has no conflicts to disclose.

\nocite*{}	
	
\bibliographystyle{plain}
\bibliography{Attractive_Screening}
	
\end{document}